\newtheorem{theorem}{Theorem}
\newtheorem{lemma}[theorem]{Lemma}
\newtheorem{corollary}[theorem]{Corollary}
\newtheorem{proposition}[theorem]{Proposition}
\newtheorem{example}[theorem]{Example}
\begin{document}
\title[Simple modules for affine Lie algebras]
{Simple modules for untwisted affine Lie algebras
induced from nilpotent loop subalgebras}

%\author{}

\author[V.~Mazorchuk]{Volodymyr Mazorchuk}

\begin{abstract}
We construct large families  of simple modules for
untwisted affine Lie algebras using induction 
from one-dimensional  modules over 
nilpotent loop subalgebras.
We also show that the vector space of the first
self-extensions for these  module has uncountable
dimension and that generic tensor products of
these modules  are simple.
\end{abstract}

\maketitle

\section{Introduction and description of the results}\label{s1}

Let $\mathfrak{g}$ be a semi-simple complex finite dimensional Lie
algebra with a fixed triangular decomposition
\begin{displaymath}
\mathfrak{g}=\mathfrak{n}_-\oplus \mathfrak{h}\oplus \mathfrak{n}_+. 
\end{displaymath}
As defined in \cite{Ko}, a {\em Whittaker} $\mathfrak{g}$-module
is a module that is generated by a one-dimensional 
$\mathfrak{n}_+$-submodule. Various classes of simple Whittaker modules 
in the above sense were studied and classified by several authors,
see \cite{Ko,Mc1,Mc2,MiSo,Ba}.

Analogues of Whittaker modules can be defined and studied in many
similar situations, see \cite{OW,Ch,LWZ,LG,BCW,ALZ,HY,Chen,AT,CCM} 
and references therein.  A general setup of {\em Whittaker pairs} in which
one can define analogues of Whittaker modules was proposed in
\cite{BM}, see also \cite{MZ2}. A Whittaker pair 
$(\mathfrak{g},\mathfrak{n})$ consists of a Lie algebra $\mathfrak{g}$
and a quasi-nilpotent  subalgebra $\mathfrak{n}$ of $\mathfrak{g}$ such that 
the action of $\mathfrak{n}$ on the adjoint $\mathfrak{n}$-module 
$\mathfrak{g}/\mathfrak{n}$ is locally nilpotent.

Various types of Whittaker modules were studied for infinite-dimensional
Kac-Moody Lie algebras, see \cite{Ch,ALZ,AT,CF,GZ}. In this paper we 
are interested in Whittaker modules for the Whittaker pair
$(\hat{\mathfrak{g}},L(\mathfrak{n}))$, where 
$\hat{\mathfrak{g}}$ is the untwisted affine Lie algebra associated
to a simple finite dimensional Lie algebra $\mathfrak{g}$ and
$L(\mathfrak{n})$ is the loop algebra of the nilpotent part
$\mathfrak{n}$ in a fixed parabolic subalgebra $\mathfrak{p}$  of
$\mathfrak{g}$. Note that $L(\mathfrak{n})$ is nilpotent and
the adjoint action of $L(\mathfrak{n})$ on
$\hat{\mathfrak{g}}/L(\mathfrak{n})$ is locally nilpotent.
Hence $(\hat{\mathfrak{g}},L(\mathfrak{n}))$ is a Whittaker pair
in the sense of \cite{BM}.

The main contribution of this paper is an explicit  construction of 
a large family of new simple $\hat{\mathfrak{g}}$-modules. These
modules are simple Whittaker modules for the above Whittaker pair
and they are indexed by certain (collections of) elements in the dual space to a
vector space of countable dimension. In particular, as a vector space,
the set of our parameters has uncountable dimension.
We note that construction of new classes of
simple modules over affine Lie algebras is a very popular subject,
see \cite{Cha,CP1,CP2,Fu,FT,FK,CTZ,GKMOS} and references therein.

The main result of the paper is Theorem~\ref{thmmain7}  which asserts 
that, under some mild assumptions on the input parameters,
the induced Whittaker module for Whittaker pair
$(\hat{\mathfrak{g}},L(\mathfrak{n}))$ is simple. 
This result and its proof can be found in Section~\ref{s7}.
Additionally, the latter section contains Corollary~\ref{corext}
that establishes the fact that the space of first self-extensions
for the modules we  construct has uncountable dimension.
In Section~\ref{s5} we prove a fairly unexpected result 
that, generically, finite  
tensor products of modules constructed in Theorem~\ref{thmmain7}
are again simple, see Theorem~\ref{thmmain5}.
Section~\ref{s2} contains necessary preliminaries and
Section~\ref{s6} a number of concrete examples.

\subsection*{Acknowledgements}
This research is partially supported by the Swedish Research Council.
The author thanks Rekha Biswal for stimulating discussions,
especially for asking the question whether tensor products of
Whittaker modules are simple or not.
\vspace{5mm}

\section{Setup and preliminaries}\label{s2}

\subsection{Combinatorics of linear duals for 
vector spaces of countable dimension}\label{s3.1}

In what follows, we work over $\mathbb{C}$, in particular,
$\otimes$ means $\otimes_{\mathbb{C}}$.

Let $V$ be a vector space with basis $\mathbf{v}:=\{v_i\,:\,i\in\mathbb{Z}\}$.
Then the dual space $V^*$ can be naturally identified with 
$\mathbb{C}^\mathbb{Z}$, where 
$\mathbf{a}:=(a_i)_{i\in\mathbb{Z}}\in\mathbb{C}^\mathbb{Z}$ corresponds to
$\varphi_{\mathbf{a}}:V\to\mathbb{C}$ which sends $v_i$ to $a_i$, for $i\in\mathbb{Z}$.

The group $\mathbb{Z}$ acts both on $V$ and on $V^*$ by shifting the 
indices: $n\cdot v_i:=v_{i+n}$ and $n\cdot (a_i):=(a_{i+n})$, for
$n,i\in\mathbb{Z}$. We will use the notation
$x^{(n)}$ to denote the $n$-translate $n\cdot x$ of $x$.

The space $\mathbb{C}^{\mathbb{Z}}$ has a
$\mathbb{Z}$-invariant subspace $\mathbb{C}^{\mathbb{Z}}_{\mathrm{fin}}$
consisting of all $\mathbf{a}$ in which only finitely many coefficients
are non-zero. Note that $\mathbb{C}^{\mathbb{Z}}_{\mathrm{fin}}$ is exactly
the linear span of the {\em dual basis $\{v_i^*\,:\,i\in\mathbb{Z}\}$} which 
is, indeed, a basis in this subspace.

An element $\mathbf{a}\in\mathbb{C}^\mathbb{Z}$ will be called 
\begin{itemize}
\item {\bf generic} provided that the multiset of all $\mathbb{Z}$-translates of 
$\mathbf{a}$ is linearly independent;
\item and {\bf strongly generic} provided that the union of the multiset of all 
$\mathbb{Z}$-translates  of $\mathbf{a}$ with the element
$(ia_i)_{i\in\mathbb{Z}}$ is linearly independent.
\end{itemize}
An alternative point of view is that $\mathbf{a}$ is not generic if and only if 
there exists a non-zero  $v\in V$ such that $\mathbf{a}$ vanishes on all 
$\mathbb{Z}$-translates of $V$. 

\begin{example}\label{example1}
{\em
Any constant element in $\mathbb{C}^{\mathbb{Z}}$, that is any element
in which all coefficients are the same, is not generic.
The element $(\delta_{i,0})_{i\in\mathbb{Z}}$, where $\delta$ is the
Kronecker symbol, is generic but  not strongly generic.
} 
\end{example}

\begin{lemma}\label{lemma1}
All non-zero elements in $\mathbb{C}^{\mathbb{Z}}_{\mathrm{fin}}$
are generic.
\end{lemma}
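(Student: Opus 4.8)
The plan is to identify $\mathbb{C}^{\mathbb{Z}}_{\mathrm{fin}}$ with the ring $R=\mathbb{C}[t,t^{-1}]$ of Laurent polynomials via $\mathbf{a}=(a_i)_{i\in\mathbb{Z}}\mapsto f_{\mathbf{a}}(t):=\sum_{i\in\mathbb{Z}}a_i t^i$ (a finite sum, since $\mathbf{a}$ has finite support). Under this identification the $\mathbb{Z}$-action becomes multiplication by powers of $t$: from $(n\cdot\mathbf{a})_i=a_{i+n}$ one computes $f_{\mathbf{a}^{(n)}}(t)=\sum_i a_{i+n}t^i=t^{-n}f_{\mathbf{a}}(t)$, so the multiset of $\mathbb{Z}$-translates of $\mathbf{a}$ corresponds to $\{t^{-n}f_{\mathbf{a}}(t)\,:\,n\in\mathbb{Z}\}=\{t^{m}f_{\mathbf{a}}(t)\,:\,m\in\mathbb{Z}\}$.

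Next I would fix a nonzero $\mathbf{a}$, so that $f:=f_{\mathbf{a}}\neq 0$, and use that $R$ is an integral domain. First, the translates are pairwise distinct: if $t^{-n}f=t^{-m}f$ then $(t^{-n}-t^{-m})f=0$ in $R$, forcing $n=m$ (equivalently, a nonzero finitely supported sequence cannot be periodic); hence the multiset of translates is genuinely a set, and "linear independence of the multiset" is not violated for a trivial reason. Then, given a finite relation $\sum_{k=1}^{r}c_k\mathbf{a}^{(n_k)}=0$ with the $n_k$ distinct, translating to $R$ yields $\bigl(\sum_{k=1}^{r}c_k t^{-n_k}\bigr)f=0$; since $R$ has no zero divisors and $f\neq 0$, we get $\sum_k c_k t^{-n_k}=0$, and since the $t^{-n_k}$ are distinct monomials, all $c_k=0$. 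Thus the set of $\mathbb{Z}$-translates of $\mathbf{a}$ is linearly independent, i.e.\ $\mathbf{a}$ is generic.

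There is no genuine obstacle here, as this is a short lemma; the only points needing a moment's care are getting the direction of the shift-versus-multiplication correspondence right (a shift by $n$ is multiplication by $t^{-n}$, not $t^{n}$) and confirming the no-repetition issue above — both are dispatched by the integral-domain property of $\mathbb{C}[t,t^{-1}]$. One can equivalently avoid Laurent polynomials and argue directly with supports: after ordering the $n_k$, the lowest (or highest) nonzero coordinate of $\sum_k c_k\mathbf{a}^{(n_k)}$ isolates a single $c_k$ and forces it to vanish, then one induct; but the ring-theoretic phrasing is cleaner and makes the genericity of every nonzero finitely supported sequence transparent.
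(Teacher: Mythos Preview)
Your proof is correct but takes a genuinely different route from the paper. The paper invokes the dual characterization recorded just before the lemma --- that $\mathbf{a}$ fails to be generic iff some nonzero $v\in V$ is annihilated by $\mathbf{a}$ together with all its $\mathbb{Z}$-translates --- and then, given such a $v$, shifts it so that its finite support meets the finite support of $\mathbf{a}$ in exactly one coordinate, making the pairing $\langle\mathbf{a},w\rangle$ manifestly nonzero. You instead stay with the primary definition and transport the linear-independence question into $\mathbb{C}[t,t^{-1}]$, where it reduces to the integral-domain property together with linear independence of monomials. Your argument is slick and exposes the algebraic mechanism cleanly (and your care with the multiset/distinctness issue is appropriate); the paper's argument has the mild advantage of working directly with the pairing-based criterion that is reused in the discussion of $\mathbf{a}^\perp$ and size that follows. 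The support-isolation alternative you sketch in your final paragraph is, in dualized form, essentially the paper's proof.
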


\begin{proof}
The zero element is, clearly, not generic. On the other hand,
if $\mathbf{a}\in \mathbb{C}^{\mathbb{Z}}_{\mathrm{fin}}$ is not zero
and $v\in V$ is not zero, we can find a $\mathbb{Z}$-translate $w$ of
$v$ for which there is a unique coordinate $i\in\mathbb{Z}$
such that the corresponding coefficients both in $\mathbf{a}$
and in $w$ are non-zero. The evaluation of $\mathbf{a}$ at this
$w$ is, clearly, non-zero. Hence such $\mathbf{a}$ is generic.
\end{proof}

Let us also make the following observation:
for a non-zero $v=\sum_i c_i v_i\in V$, where all $c_i\in\mathbb{C}$
and only finitely many of them are non-zero, set
\begin{displaymath}
l(v)=\min\{i:c_i\neq 0\},\qquad
r(v)=\max\{i:c_i\neq 0\}
\end{displaymath}
and $\omega(v):=r(v)-l(v)\in\mathbb{Z}_{\geq 0}$. 
If $\mathbf{a}\in\mathbb{C}^\mathbb{Z}$ is not generic and vanishes
on all $\mathbb{Z}$-translates of the above $v\in V$, then
$\mathbf{a}$ is uniquely determined by $a_0,a_1,\dots,a_{\omega(v)-1}$.
Indeed, if $\omega(v)=0$, then $\mathbf{a}$ is the zero element.
If $\omega(v)>0$ and $a_0,a_1,\dots, a_{\omega(v)-1}$ are fixed, we take the
$\mathbb{Z}$-translate $v'$ of $v$ for which $l(v')=0$
and thus $r(v')=\omega(v)$. The condition that $\mathbf{a}$ vanishes on
$v'$ determines $a_{\omega(v)}$ uniquely. Translating $v'$ to the right,
we similarly determine all $a_i$, for $i> \omega(v)$. Translating  $v'$ 
to the left, we similarly determine all $a_i$, for $i<0$.

We will use the notation $\langle{}_-,{}_-\rangle$ to denote the
evaluation of an element of $V^*$ at an element of $V$. Expressed
using the coordinates with respect to the standard bases, this is 
given by the formula $\sum_i x_iy_i$. We note that the latter
expression is well-defined provided that either $(x_i)$ or
$(y_i)$ has only finitely many non-zero coefficients.
For simplicity, our convention is that $\langle{}_-,{}_-\rangle$
is symmetric.

For $\mathbf{a}\in\mathbb{C}^\mathbb{Z}$, denote by
$\mathbf{a}^\perp$ the set of all $v\in V$ such that
$\langle\mathbf{a},v^{(i)}\rangle=0$, for all $i\in\mathbb{Z}$.
Clearly, $\mathbf{a}^\perp$ is a subspace of $V$. 
If $\mathbf{a}$ is zero, then the corresponding $\mathbf{a}^\perp$
is the whole $V$. If $\mathbf{a}$ is generic, then 
the corresponding $\mathbf{a}^\perp$ is the zero subspace of $V$.

The following lemma is inspired by the notion of {\em size}
of a non-homogeneous ideal in \cite[Page~221]{MZ}.

\begin{lemma}\label{lemma2}
If  $\mathbf{a}$ is not generic, then $\mathbf{a}^\perp$ is a subspace of
$V$ of finite codimension $m$. Moreover, 
this number $m$ is the minimal non-negative integer for  which there is
$v\in V$ whose all $\mathbb{Z}$-translates are annihilated by
$\mathbf{a}$ and for which $\omega(v)=m$. Additionally, 
the collection of all $\mathbb{Z}$-translates of this $v$ forms a basis
in $\mathbf{a}^\perp$ and this basis can be extended to a basis of 
$V$ using $v_0,v_1,\dots,v_{\omega(v)-1}$ (note that this latter set
is empty if $\omega(v)=0$).
\end{lemma}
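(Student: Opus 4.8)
The plan is to leverage the observation made just before the statement, which already shows that any non-generic $\mathbf{a}$ vanishing on all $\mathbb{Z}$-translates of a given $v$ is uniquely determined by the finitely many scalars $a_0, a_1, \dots, a_{\omega(v)-1}$. First I would argue that if $\mathbf{a}$ is not generic, then there exists $v \in V$, all of whose $\mathbb{Z}$-translates are annihilated by $\mathbf{a}$, with $\omega(v)$ \emph{minimal} among all such choices; call this minimal value $m$. I then claim that $\mathbf{a}^\perp$ has codimension exactly $m$ and that the $\mathbb{Z}$-translates of this particular $v$ form a basis of $\mathbf{a}^\perp$.

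The key computation is linear independence of $\{v^{(i)} : i \in \mathbb{Z}\}$: since the support of $v^{(i)}$ is the interval $[l(v)+i, r(v)+i]$, the elements $v^{(i)}$ have strictly increasing left endpoints, so any nontrivial linear combination, taking the translate with the smallest index appearing, has a nonzero coefficient at its leftmost coordinate and cannot vanish. This shows the translates of $v$ span a subspace $W \subseteq \mathbf{a}^\perp$ of codimension exactly $m = \omega(v)$ in $V$ — indeed I would check that $W$ together with $v_0, v_1, \dots, v_{m-1}$ spans all of $V$ (again a left-endpoint / triangularity argument: using translates of $v$ one produces, modulo lower-support vectors, every $v_j$ with $j \ge m$, and symmetrically every $v_j$ with $j < 0$; the span of $W$ thus misses only the $m$-dimensional complement spanned by $v_0, \dots, v_{m-1}$) and that these $v_0,\dots,v_{m-1}$ are linearly independent modulo $W$.

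It then remains to prove $\mathbf{a}^\perp = W$, i.e. that $\mathbf{a}^\perp$ is not strictly larger. Suppose $u \in \mathbf{a}^\perp \setminus W$. Reducing $u$ modulo $W$ using the basis decomposition $V = W \oplus \langle v_0,\dots,v_{m-1}\rangle$, we may assume $u$ is a nonzero combination of $v_0, \dots, v_{m-1}$, hence a nonzero vector with $\omega(u) \le m-1 < m$, all of whose translates are annihilated by $\mathbf{a}$ — except that $u$ might be zero, in which case $u \in W$, contradiction; if $u \neq 0$ this contradicts minimality of $m$. This simultaneously proves $\mathbf{a}^\perp = W$ has codimension $m$ and that $m$ is the minimal possible $\omega$-value, giving the full statement. (The degenerate case $m = 0$ forces $\mathbf{a} = 0$ by the preliminary observation, contradicting non-genericity, so in fact $m \ge 1$ here; I would note this but it does not affect the argument.)

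The main obstacle I anticipate is the bookkeeping in the ``extend to a basis of $V$'' step: one must carefully run the triangular elimination in both directions from a translate $v'$ of $v$ normalized so that $l(v') = 0$, showing that the $\mathbb{Z}$-translates of $v'$ together with $v_0, \dots, v_{m-1}$ hit every basis vector $v_j$ after subtracting off vectors already produced. This is exactly the argument sketched in the paragraph preceding the lemma (used there to pin down the coefficients $a_i$), so I would phrase it as a direct linear-algebra mirror of that discussion rather than redoing it from scratch; the symmetry of left/right translation makes the two directions identical up to relabeling.
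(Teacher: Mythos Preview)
Your proof is correct and follows essentially the same reduction-by-translates argument as the paper's: both hinge on taking a $v \in \mathbf{a}^\perp$ of minimal width $m$ and using its translates to reduce any other element of $\mathbf{a}^\perp$, with minimality forcing the remainder to vanish; you are simply more explicit about the direct-sum decomposition $V = W \oplus \langle v_0,\dots,v_{m-1}\rangle$, which the paper compresses into ``all claims of the lemma follow.'' One small slip in your closing parenthetical: the case $m = 0$ does \emph{not} contradict non-genericity, since $\mathbf{a} = 0$ is itself non-generic (cf.\ the proof of Lemma~1 and the lemma's own remark that the set $v_0,\dots,v_{\omega(v)-1}$ may be empty) --- but as you say, this does not affect the argument.
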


\begin{proof}
Define $m$ as the minimal non-negative integer for  which there is
$v\in V$ whose all $\mathbb{Z}$-translates are annihilated by
$\mathbf{a}$ and for which $\omega(v)=m$. Let $v'\in \mathbf{a}^\perp$
be non-zero. Assume $\omega(v')>\omega(v)$. Then we can take a 
$\mathbb{Z}$-translate $v''$ of $v'$ such that $l(v)=l(v'')$. 
Subtracting from $v''$ a non-zero multiple of $v$ we can eliminate
the $l(v'')$-component and create a new vector $v'''$ with
$\omega(v''')<\omega (v')$.

If $\omega(v')=\omega(v)$, then a similar manipulation must result
in $0$, due to the minimality of $m$. This implies that $v'$
is a linear combination of $\mathbb{Z}$-translates of $v$.
All claims of the lemma follow.
\end{proof}

For a non-generic $\mathbf{a}$, we will call the number
$m$ in Lemma~\ref{lemma2} the {\bf size} of $\mathbf{a}$.

A set $Q$ of elements from $\mathbb{C}^\mathbb{Z}$ will be called 
\begin{itemize}
\item {\bf generic} provided that the multiset of all $\mathbb{Z}$-translates of 
all elements in $Q$ is linearly independent;
\item and {\bf strongly generic} provided that the union of the multiset of all 
$\mathbb{Z}$-translates  of all elements in $Q$ with the set of 
all elements of the form $(ia_i)_{i\in\mathbb{Z}}$,
where $\mathbf{a}\in Q$, is linearly independent.
\end{itemize}
The word ``multiset'' used in the definition above means that no
translates of different elements in $Q$ may coincide.

Since $\mathbb{C}^\mathbb{Z}$ has uncountable dimension, finite, or even
countable, subsets of elements in $\mathbb{C}^\mathbb{Z}$ are almost always
strongly generic.

\subsection{Injective modules over the polynomial algebra}\label{s3.1-5}

For $V$ as in the previous subsection, consider the free commutative
algebra $\mathbb{C}[V]$ on $V$, that is the quotient of the free tensor 
algebra on $V$ by the ideal generated by $v_i\otimes v_j-v_j\otimes v_i$,
for all $i,j\in\mathbb{Z}$. The algebra $\mathbb{C}[V]$ is the polynomial
algebra in the variables $v_i$, where $i\in\mathbb{Z}$. Note that 
$\mathbb{C}[V]$ has a natural (positive) $\mathbb{Z}$-grading by monomial degree.

Consider the (left) regular $\mathbb{C}[V]$-module ${}_{\mathbb{C}[V]}\mathbb{C}[V]$.
It is free and hence projective in $\mathbb{C}[V]$-Mod. As a graded module, we have
\begin{displaymath}
\mathbb{C}[V]=\bigoplus_{i\geq 0} \mathbb{C}[V]_i.
\end{displaymath}
Note that $\mathbb{C}[V]_i$ is infinite dimensional, for $i\neq 0$.
Now we can consider the restricted dual space
\begin{displaymath}
\mathbb{C}[V]^{\star}=\bigoplus_{i\geq 0} \mathbb{C}[V]_i^*, 
\end{displaymath}
where $*$, as usual, denotes $\mathrm{Hom}_\mathbb{C}({}_-,\mathbb{C})$.
The module $\mathbb{C}[V]^{\star}$ is a $\mathbb{C}[V]$-submodule of 
the dual module $\mathbb{C}[V]^*$. In fact,
directly from the construction it follows that $\mathbb{C}[V]^{\star}$
is the indecomposable injective envelope of the (unique) simple graded 
$\mathbb{C}[V]$-module. In particular, the vector space of
the first self-extensions of this
(unique) simple graded $\mathbb{C}[V]$-module is naturally isomorphic to
$\mathbb{C}[V]_1^*$ and thus has uncountable dimension.

For each $\mathbf{a}\in\mathbb{C}^\mathbb{Z}$, we have an algebra automorphism
$\alpha_\mathbf{a}$ of $\mathbb{C}[V]$ which sends $v_i$ to $v_i-a_i$, for
$i\in\mathbb{Z}$. Applying twisting by $\alpha_\mathbf{a}$ to the above grading,
produces a new grading on $\mathbb{C}[V]$ and gives a new indecomposable injective
object ${}^{\alpha_\mathbf{a}}\mathbb{C}[V]^{\star}$ which now lives 
in the corresponding category of graded modules.
Consequently, the space of the first self-extensions of any simple finite dimensional 
$\mathbb{C}[V]$-module has uncountable dimension.

\subsection{Untwisted affine Lie algebras}\label{s2.1}

Let $\mathfrak{g}$ be a semi-simple finite dimensional Lie algebra 
with a fixed triangular decomposition
\begin{displaymath}
\mathfrak{g}=\mathfrak{n}_-\oplus \mathfrak{h}\oplus \mathfrak{n}_+.
\end{displaymath}
Consider the loop algebra $L(\mathfrak{g})$ of $\mathfrak{g}$,
defined as $\mathfrak{g}\otimes\mathbb{C}[t,t^{-1}]$, where the 
Lie bracket is given by
\begin{displaymath}
[x\otimes t^i,y\otimes t^j]=[x,y]\otimes t^{i+j}. 
\end{displaymath}
The associated {\em untwisted affine Lie algebra} 
$\hat{\mathfrak{g}}$ is defined as 
$\mathfrak{g}\oplus \mathbb{C}d\oplus \mathbb{C}c$,
where $d$ is the derivation on $L(\mathfrak{g})$ given by
$[d,x\otimes t^i]=i(x\otimes t^i)$ and $c$ is a central element
involved in the universal central extension of the loop algebra
given by 
\begin{displaymath}
[x\otimes t^i,y\otimes t^j]=[x,y]\otimes t^{i+j}
+\delta_{i,-j}(x,y)c, 
\end{displaymath}
where $({}_-,{}_-)$ is the Killing form on $\mathfrak{g}$.

Let $\mathfrak{p}$ be a fixed parabolic subalgebra of $\mathfrak{g}$
containing the Borel subalgebra $\mathfrak{h}\oplus \mathfrak{n}_+$
and different from $\mathfrak{g}$.
We can write
\begin{displaymath}
\mathfrak{p}=\mathfrak{l}\oplus \mathfrak{n}, 
\end{displaymath}
where $\mathfrak{n}$ is the nilradical of $\mathfrak{p}$
and $\mathfrak{l}$ is a (reductive) Levi subalgebra. We can extend
this to a decomposition of $\mathfrak{g}$:
\begin{displaymath}
\mathfrak{g}=\mathfrak{m}\oplus\mathfrak{l}\oplus \mathfrak{n}, 
\end{displaymath}
where $\mathfrak{m}$ is the negative counterpart of $\mathfrak{n}$.

The loop algebra $L(\mathfrak{n})$ of $\mathfrak{n}$ is nilpotent and
the adjoint action of $L(\mathfrak{n})$ on $\hat{\mathfrak{g}}$
is locally nilpotent. Hence $(\hat{\mathfrak{g}},L(\mathfrak{n}))$ 
is a Whittaker pair in the sense of \cite{BM}.

Let $\Phi$ be the root system of $\mathfrak{g}$. Denote by 
$\Phi_{\mathfrak{n}}$ the subset of $\Phi$ consisting of all
roots that appear in $\mathfrak{n}$. We further write
$\Phi_{\mathfrak{n}}=\Phi_{\mathfrak{n}}^0\cup \Phi_{\mathfrak{n}}^1$,
where $\Phi_{\mathfrak{n}}^1$  is the subset of 
$\Phi_{\mathfrak{n}}$ consisting of all roots that appear in 
$[\mathfrak{n},\mathfrak{n}]$ and
$\Phi_{\mathfrak{n}}^0:=\Phi_{\mathfrak{n}}\setminus \Phi_{\mathfrak{n}}^1$.
The condition that $\mathfrak{p}$ is different from $\mathfrak{g}$
means that $\Phi_{\mathfrak{n}}\neq \varnothing$. As 
$\mathfrak{n}$ is nilpotent, it follows that 
$\Phi_{\mathfrak{n}}^0\neq \varnothing$.

Finally, for each root $\alpha\in \Phi$ we fix a non-zero element
$X_\alpha$ in the corresponding root subspace $\mathfrak{g}_\alpha$
in $\mathfrak{g}$.

\section{Simple induced Whittaker modules for 
$(\hat{\mathfrak{g}},L(\mathfrak{n}))$}\label{s7}

\subsection{Setup}\label{s7.1}

For $\theta\in\mathbb{C}$, consider the quotient 
$U_\theta$ of the universal enveloping algebra $U(\hat{\mathfrak{g}})$
of $\hat{\mathfrak{g}}$ by the two-sided ideal generated
by $c-\theta$. Note that $U_\theta$ is a simple algebra
if $\theta\neq 0$, see \cite{BS}.

For a map $\Lambda$ from $\Phi_{\mathfrak{n}}^0$ to $\mathbb{C}^\mathbb{Z}$,
we can consider the corresponding one-dimensional 
$L(\mathfrak{n})$-module $\mathbb{C}_\Lambda$. This module is 
annihilated by $[L(\mathfrak{n}),L(\mathfrak{n})]$ and, for
$\alpha\in \Phi_{\mathfrak{n}}^0$, the element $X_\alpha\otimes t^j$
acts on it as $a_j$, where $\mathbf{a}=(a_i)_{i\in\mathbb{Z}}=\Lambda(\alpha)$
and $j\in\mathbb{Z}$.

\subsection{The main result}\label{s7.2}

\begin{theorem}\label{thmmain7}
Assume that $\Lambda$ and $\theta$ are as above and that 
the image of $\Lambda$ (considered as a multiset) is strongly generic.
Then the module
\begin{displaymath}
M(\Lambda,\theta)=
U_\theta\bigotimes_{U(L(\mathfrak{n}))}\mathbb{C}_\Lambda
\end{displaymath}
is simple.
\end{theorem}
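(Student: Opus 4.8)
The strategy is the standard one for establishing simplicity of induced Whittaker modules: show that every non-zero submodule $N\subseteq M(\Lambda,\theta)$ must contain the canonical generator $1\otimes\mathbb{C}_\Lambda$. By the PBW theorem, $M(\Lambda,\theta)$ is free as a module over $U(\mathfrak{m}\oplus\mathfrak{l}\oplus\mathbb{C}d)\otimes U(L(\mathfrak{n}))/(\text{relations})$; more precisely, choosing an ordered basis of $\hat{\mathfrak{g}}$ adapted to the decomposition $\hat{\mathfrak{g}} = (\mathfrak{m}\text{-loop part})\oplus(\mathfrak{l}\text{-loop part})\oplus \mathbb{C}d\oplus\mathbb{C}c\oplus L(\mathfrak{n})$, one gets a vector-space identification $M(\Lambda,\theta)\cong U(L(\mathfrak{m})\oplus L(\mathfrak{l})\oplus\mathbb{C}d)\otimes\mathbb{C}_\Lambda$. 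The first step is to introduce a suitable filtration (or $\mathbb{Z}$-grading coming from a principal grading element, e.g. using $d$ together with a height function on roots) so that we can talk about the "leading term" of any element $u\otimes 1\in M(\Lambda,\theta)$, and to set up the action of the operators $X_\alpha\otimes t^j$ for $\alpha\in\Phi_{\mathfrak{n}}^0$ on this PBW basis. The key computation is that, for $\alpha\in\Phi_{\mathfrak n}^0$ and an element written in PBW form, acting by $X_\alpha\otimes t^j$ produces two kinds of contributions: a "scalar" contribution $a_j$ (from commuting $X_\alpha\otimes t^j$ all the way to the right onto $\mathbb{C}_\Lambda$) and "derivative-type" contributions coming from the brackets $[X_\alpha\otimes t^j, X_{-\beta}\otimes t^k]$ with root vectors appearing in $u$.

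The heart of the argument is the following reduction. Take $0\neq x\in N$ and write $x = \sum_k u_k\otimes 1$ in the PBW decomposition above; choose $x$ of minimal "length" (minimal degree/complexity of the leading PBW monomials). The plan is to hit $x$ with a carefully chosen operator $X_\alpha\otimes t^j$ (with $j$ ranging over $\mathbb{Z}$) and exploit strong genericity of the image of $\Lambda$ to show that one can isolate monomials and strictly reduce the length. Concretely: if some $u_k$ contains a factor involving a negative root vector $X_{-\beta}\otimes t^k$ from $L(\mathfrak{m})$, then an appropriate $\alpha\in\Phi_{\mathfrak{n}}^0$ (chosen so that $\alpha - \beta$ is not a root, or handled via the fact that $\Phi_{\mathfrak{n}}^0\neq\varnothing$ and the geometry of $\Phi$) lets us produce, among the many translates, a combination that annihilates the unwanted term; here the condition that the \emph{multiset} of translates of the $\Lambda(\alpha)$ is linearly independent (genericity) is exactly what guarantees that the relevant Vandermonde-type system has the solution we need. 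Similarly, factors involving $\mathfrak{l}$-loop vectors $X_\gamma\otimes t^k$ and the element $d$ are removed: a factor of $d$ produces, upon bracketing with $X_\alpha\otimes t^j$, the term $j(X_\alpha\otimes t^j)$ acting as $j a_j$ — and this is precisely where \emph{strong} genericity (adjoining the element $(ia_i)_i$ to the independent set) is needed, to separate the "$d$-derivative" contribution from the plain scalar one. Iterating, we reduce $x$ to a non-zero element of $\mathbb{C}_\Lambda\subseteq N$, whence $N = M(\Lambda,\theta)$.

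Two technical points need care. First, one must check that $M(\Lambda,\theta)$ is a non-zero, hence faithful-on-$\mathbb{C}_\Lambda$, module and that the central element acts correctly — this follows from $U_\theta$ being a well-defined algebra and the PBW theorem — and, when $\theta\neq 0$, simplicity of $U_\theta$ (the cited result of \cite{BS}) can give an alternative quicker route for part of the argument, but the case $\theta=0$ must be handled by the explicit reduction. Second, one should verify that the element $c-\theta$ in the ideal does not interfere: since $L(\mathfrak{n})$ contains no multiples of $c$ and the relevant brackets $[X_\alpha\otimes t^j, X_{-\beta}\otimes t^k]$ produce a term in $\mathbb{C}c$ only when $\beta=\alpha$ and $k=-j$, these central contributions are scalars $\delta$-terms that are easily tracked.

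\medskip

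\noindent\emph{Main obstacle.} The crux — and the step I expect to be hardest — is the combinatorial bookkeeping in the inductive reduction: organizing the PBW monomials, identifying precisely which $\alpha\in\Phi_{\mathfrak{n}}^0$ and which finite set of translates $j$ to use at each stage, and proving that the resulting linear system on the coefficients is non-degenerate \emph{solely} as a consequence of strong genericity. In particular, one must rule out "resonances" where bracketing $X_\alpha\otimes t^j$ with several different root vectors in a monomial produces, for every choice of $j$, the same linear functional of the $a_j$'s as the scalar term; the strong genericity hypothesis (via Lemma~\ref{lemma2} and the independence of $(ia_i)_i$) is tailored to preclude exactly this, so the real work is in matching the algebra of the affine Lie bracket to that linear-algebra input.
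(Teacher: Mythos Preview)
Your proposal is correct and follows essentially the same approach as the paper: establish that $1\otimes 1$ is (up to scalar) the unique $L(\mathfrak{n})$-eigenvector via a PBW leading-term argument, applying $(X_\alpha\otimes t^j)-a_j$ and invoking strong genericity exactly where you anticipate---at the $L(\mathfrak{l})\oplus\mathbb{C}d$ layer, with the extra vector $(ia_i)_i$ arising from $[d,X_\alpha\otimes t^j]=j(X_\alpha\otimes t^j)$. The bookkeeping you flag as the main obstacle is handled in the paper by ordering roots via the lower central series of $\mathfrak{n}$ (stratifying $-\Phi_{\mathfrak{n}}$ into layers $-X_k,-X_{k-1},\dots,-X_0$, then the $\mathfrak{l}$-roots and $0$), refining by the $t$-exponent, and taking the lexicographic order on PBW monomials; the reduction is then a three-case split on which layer the rightmost factor $u_m$ of the leading monomial belongs to, with $j$ chosen sufficiently negative to avoid spurious $c$-contributions in the $L(\mathfrak{m})$ cases.
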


The fact that we consider the image of $\Lambda$ 
as a multiset means that, in order to be strongly 
generic, $\Lambda(\alpha)$ can coincide neither with 
any $\mathbb{Z}$-translate of $\Lambda(\beta)$
nor  with $(ia_i)_{i\in\mathbb{Z}}$, where $\Lambda(\beta)=(a_i)_{i\in\mathbb{Z}}$,
for any two different elements  $\alpha,\beta\in\Phi_{\mathfrak{n}}^0$.

Clearly, $M(\Lambda,\theta)\cong M(\Lambda',\theta')$ implies
$\Lambda=\Lambda'$ and $\theta=\theta'$. Therefore
Theorem~\ref{thmmain7} produces a huge family of pairwise non-isomorphic
simple $U(\hat{\mathfrak{g}})$-modules.

\subsection{Proof of the main result}\label{s7.3}

Let $\Lambda$ and $\theta$ be as above and assume that  the image of $\Lambda$ 
(considered as a multiset) is strongly generic. We are going to 
prove that $1\otimes 1$ is the only element of $M(\Lambda,\theta)$
(up to scalars) on which all elements of $L(\mathfrak{n})$ act
as scalars. Since the action of $L(\mathfrak{n})$ on 
$M(\Lambda,\theta)$ is locally finite and $1\otimes 1$ generates
$M(\Lambda,\theta)$, this immediately implies simplicity of 
$M(\Lambda,\theta)$.

Consider the lower central series 
\begin{displaymath}
\mathfrak{n}=\mathfrak{n}_0\supset 
\mathfrak{n}_1\supset \mathfrak{n}_2\supset \dots
\end{displaymath}
where $\mathfrak{n}_i=[\mathfrak{n},\mathfrak{n}_{i-1}]$,
and let $X_i$ be the subset of $\Phi$ consisting of all
those roots of $\mathfrak{n}_i$ that are not roots of
$\mathfrak{n}_{i+1}$. Note that $X_0=\Phi^0_\mathfrak{n}$
and $X_1\cup X_2\cup\dots =\Phi^1_\mathfrak{n}$.
As $\mathfrak{n}$ is nilpotent, $X_i=\varnothing$, for
$i\gg 0$. Let $k$ be maximal such that $X_k\neq\varnothing$.

Fix some basis for the root system of the semi-simple part of
$\mathfrak{l}$. This defines the standard partial order on
the set of all roots of $\mathfrak{l}$, that is 
$\alpha\leq \beta$ provided that $\beta-\alpha$ is a 
linear combination of positive roots with non-negative
integer coefficients. Fix a linear order on
$(\Phi\cup\{0\})\setminus\Phi_\mathfrak{n}$ such that
\begin{itemize}
\item the elements in $-X_k$ are all smaller than 
the elements in $-X_{k-1}$ which, in turn, are all smaller
that the element in $-X_{k-2}$ and then all the way up
with the roots of $\mathfrak{l}$ and $0$ coming last;
\item within each $-X_i$ and within the roots of 
$\mathfrak{l}$ and $0$, one element is greater than the other
if their difference is a linear combination of positive roots
of $\mathfrak{l}$ with non-negative integer coefficients.
\end{itemize}

Now define a linear order on all elements of the form
$d$, $X_\alpha\otimes t^j$, where $\alpha\in\Phi$, and 
$H_\alpha\otimes t^j$, where $\alpha$ is a simple root,
by
\begin{itemize}
\item first comparing the corresponding roots with respect to
the above order;
\item  then comparing the corresponding exponents of $t$;
\item and, finally, letting $d$ be greater  than all
$H_\alpha\otimes t^j$.
\end{itemize}

By the PBW Theorem, the module $M(\Lambda,\theta)$ has
a basis given by standard monomials in the above elements.
Consider the lexicographic order on the set of these
monomials. Here $1$ is the maximum element, it corresponds
to $1\otimes 1\in M(\Lambda,\theta)$ and this element 
has the property that the action of all elements from
$L(\mathfrak{n})$ on it is scalar.

Now let $\omega$ be an element of $M(\Lambda,\theta)$ that is not
a scalar multiple of $1\otimes 1$. Let $\mathbf{l}(\omega)$ be the
{\bf leading term} of $\omega$, that is the minimum element in the
set of all monomials that appear in $\omega$ with non-negative
coefficients. Without loss of generality, we may assume that 
the coefficient at $\mathbf{l}(\omega)$ for $\omega$ is $1$.

In order to prove our theorem, we want to prove that
the $L(\mathfrak{n})$-submodule generated by $\omega$
differs from $\mathbb{C}\omega$. For this it is enough to
show that there is $\alpha\in\Phi_\mathfrak{n}^0$ and
$j\in\mathbb{Z}$ such that the action of the element
$(X_\alpha\otimes t^j)-a_j$, where $\Lambda(\alpha)=(a_i)_{i\in\mathbb{Z}}$, 
on $\omega$ is not zero.

Assume $\mathbf{l}(\omega)=u_1u_2\dots u_m$, where each 
$u_i$ is a basis element as above (i.e. either $d$ or 
$X_\alpha\otimes t^j$ or $H_\alpha\otimes t^j$).
We will consider several cases depending on what $u_m$ is.

{\bf Case~1.} Assume that $u_m$ corresponds to $\beta\in -X_i$,
for $i>0$. Then $\beta=\gamma-\alpha$, for some
$\gamma\in -X_{i-1}$ and some $\alpha\in X_0=\Phi_\mathfrak{n}^0$.
Let $\Lambda(\alpha)=(a_i)_{i\in\mathbb{Z}}$
and let us apply $(X_{\alpha}\otimes t^j)-a_j$, for some chosen $j$.

Due to the Leibniz rule, the effect of this application on any 
monomial $v_1v_2\dots v_p$ is given by the sum of the monomials of
the form 
$v_1v_2\dots v_{s-1}[X_{\alpha}\otimes t^j,v_s]v_{s+1}\dots v_p$.
For simplicity, we can choose $j$ small enough such that $c$ 
never appears in the computation of the commutators.
Consequently, up to a non-zero scalar, the monomial
$u_1u_2\dots u_{m-1}[X_{\alpha}\otimes t^j,u_m]$ will be the
leading term of the outcome. In particular, this outcome is
non-zero.

{\bf Case~2.} Assume that $u_m$ corresponds to $\beta\in -X_0$. 
Here the argument is the same as in Case~1, with the
only difference that $\gamma=0$.

{\bf Case~3.} Assume that $u_m$ corresponds to $\beta$ which is
either $0$ or a weight of $\mathfrak{l}$. This is the most 
complicated situation. The problem is that in this case
$[X_{\alpha}\otimes t^j,u_m]$ belongs to $\mathfrak{n}$
and hence its action on $1\otimes 1$ is scalar. Therefore we
expect $u_1u_2\dots u_{m-1}$ to be the leading term of the
new element. Unfortunately, any monomial of the form
$u_1u_2\dots u_{m-1}u$, where $u\in \mathfrak{l}$, may 
contribute to this new leading term. Therefore we need
to show that we can choose $\alpha$ and $j$ such that
all these contributions do not cancel each other.

It is clear that $u_1u_2\dots u_{m-1}$ does not really play any
role in the argument anymore. So, we are essentially looking
at the situation when $\omega\in L(\mathfrak{l})\oplus \mathbb{C}d$,
so we take this $\omega$, apply $[X_{\alpha}\otimes t^j,\omega]$
and then apply the linear functional given by $\Lambda$.
This defines a linear map from $L(\mathfrak{l})\oplus \mathbb{C}d$
to $\mathbb{C}$. We want to prove that the intersection of the
kernels of all these maps, taken over all $\alpha\in\Phi_\mathfrak{n}^0$
and over all $j\in\mathbb{Z}$ is zero. This is where we will use 
the assumption that the image of $\Lambda$  (considered as a multiset) 
is strongly generic.

To start with, we choose $\alpha$ such that $\beta+\alpha$ is a root.
Now choose a basis in $\mathfrak{h}$ by starting with a basis 
of the codimension one subspace which annihilates $\alpha$ and
then extending it by one element to a basis of  $\mathfrak{h}$.
Denote that last element by $X_0$ (similarly to $X_\alpha$,
where $\alpha$ was a root). The remaining basis elements of the 
Cartan commute with $X_\alpha$ and hence play no role.
To consider all potential cancellations,
we need to look at all $\gamma$, which are roots of 
$\mathfrak{l}$ or $0$, such that $\gamma+\alpha$ is a root.
The cancellations will potentially come from
$[X_\gamma\otimes t^s,X_\alpha\otimes t^j]$ as well as
from $[d,X_\alpha\otimes t^j]$.

For $\gamma$ as above, let $\mathbf{b}^\gamma=(b^\gamma_i)_{i\in\mathbb{Z}}$
be the corresponding vector of coefficients for $\omega$. Note that 
this vector is finitary, that is, only finitely many coefficients are
non-zero. Similarly we define $b^d$ as the coefficient for $d$. Denote
$\Lambda(\alpha)$ by $\mathbf{a}$. For a fixed exponent $j$,
the commutator $[X_\gamma\otimes t^s,X_\alpha\otimes t^j]$,
which we can sum up over all $s$, produces, up to a non-zero scalar,
the coefficient $\langle\mathbf{b}^\gamma,\mathbf{a}^{(j)}\rangle$.
Similarly, the commutator $[d,X_\alpha\otimes t^j]$
produced the coefficient $ja_j b^d$.
The condition that all this should sum up  to zero, for each $j$,
is equivalent to saying that the set of all $\mathbf{a}^{(j)}$,
where $j\in\mathbb{Z}$,
together with the vector $(ia_i)$ is linearly dependent.
This contradicts our assumption and completes the proof.

\subsection{$U(L(\mathfrak{n}))$-socle of $M(\Lambda,\theta)$}\label{s7.4}

Our proof of Theorem~\ref{thmmain7} shows that the
locally finite  $U(L(\mathfrak{n}))$-module $M(\Lambda,\theta)$ 
has simple socle. This module, however, is not the injective hull
of its simple socle in the category of locally finite $U(L(\mathfrak{n}))$-modules.
Indeed, $M(\Lambda,\theta)$ has countable dimension while 
the injective hull of its simple socle does not, see 
Subsection~\ref{s3.1-5}.

In the classical situation of semi-simple finite dimensional 
Lie algebras, generic Whittaker modules coincide with the 
injective hulls of their socles in the category of 
locally finite $U(\mathfrak{n}_+)$-modules. 

The results of the next subsection are inspired by this
fundamental differences between our and the classical situations.
The results of the next section have no analogues in the 
classical case due to elementary dimension counting arguments.

\subsection{First self-extensions}\label{s7.7}

\begin{corollary}\label{corext}
Let $\Lambda$  and $\theta$ be as in Theorem~\ref{thmmain7}.
Then the vector space
\begin{displaymath}
\mathrm{Ext}^1_{U(\hat{\mathfrak{g}})}
(M(\Lambda,\theta),M(\Lambda,\theta))
\end{displaymath}
has uncountable dimension.
\end{corollary}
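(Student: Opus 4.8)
The plan is to produce, for each linear functional $\xi$ on a space of countable dimension, a non-split self-extension of $M(\Lambda,\theta)$, and to show that these extensions span a subspace of $\mathrm{Ext}^1$ of uncountable dimension. The mechanism is the one already visible in Subsection~\ref{s3.1-5}: twisting the trivial-like one-dimensional $L(\mathfrak{n})$-module by an algebra automorphism produces a graded injective object of uncountable length, and self-extensions of a simple module correspond to the degree-one part of its injective hull. Concretely, for a fixed $\alpha_0\in\Phi_\mathfrak{n}^0$, consider the two-dimensional $L(\mathfrak{n})$-module $N_\xi$ on a basis $\{e_0,e_1\}$ on which $[L(\mathfrak{n}),L(\mathfrak{n})]$ still acts by zero, $X_\alpha\otimes t^j$ acts on both $e_0,e_1$ by the scalar $\Lambda(\alpha)_j$ for $\alpha\neq\alpha_0$, and $X_{\alpha_0}\otimes t^j$ sends $e_0\mapsto \Lambda(\alpha_0)_j e_0 + \xi_j e_1$, $e_1\mapsto \Lambda(\alpha_0)_j e_1$, where $\xi=(\xi_j)_{j\in\mathbb Z}\in\mathbb C^{\mathbb Z}_{\mathrm{fin}}$. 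This is a genuine $L(\mathfrak n)$-module since $[X_{\alpha_0}\otimes t^i,X_{\alpha_0}\otimes t^j]=0$ in $L(\mathfrak n)$; it is a self-extension of $\mathbb{C}_\Lambda$ by $\mathbb{C}_\Lambda$.

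Next I would set $E_\xi := U_\theta\otimes_{U(L(\mathfrak{n}))} N_\xi$. Since $\mathbb{C}_\Lambda\hookrightarrow N_\xi\twoheadrightarrow\mathbb{C}_\Lambda$ is a short exact sequence of $L(\mathfrak n)$-modules and $U_\theta$ is free as a right $U(L(\mathfrak n))$-module (PBW, exactly as used in the proof of Theorem~\ref{thmmain7}), induction is exact, so we get a short exact sequence $M(\Lambda,\theta)\hookrightarrow E_\xi\twoheadrightarrow M(\Lambda,\theta)$ of $U(\hat{\mathfrak g})$-modules, i.e. a class $[E_\xi]\in\mathrm{Ext}^1_{U(\hat{\mathfrak g})}(M(\Lambda,\theta),M(\Lambda,\theta))$. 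The assignment $\xi\mapsto[E_\xi]$ is visibly linear in $\xi$ (Baer sum of extensions corresponds to addition of the off-diagonal cocycles $\xi$), so it defines a linear map $\mathbb C^{\mathbb Z}_{\mathrm{fin}}\to\mathrm{Ext}^1_{U(\hat{\mathfrak g})}(M(\Lambda,\theta),M(\Lambda,\theta))$.

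The crux is injectivity of this map, i.e. that $E_\xi$ splits as a $U(\hat{\mathfrak g})$-module only when $\xi=0$. Here I would argue on the $L(\mathfrak n)$-module level: the proof of Theorem~\ref{thmmain7} shows $M(\Lambda,\theta)$ has simple $U(L(\mathfrak n))$-socle spanned by $1\otimes 1$ (the unique, up to scalar, Whittaker vector of type $\Lambda$). If the $U(\hat{\mathfrak g})$-sequence split, then in particular $E_\xi$ would contain two linearly independent vectors on which $L(\mathfrak n)$ acts by the character $\Lambda$ — but by strong genericity and the argument of Theorem~\ref{thmmain7} applied verbatim to the locally finite $U(L(\mathfrak n))$-module $E_\xi$ (whose relevant leading-term combinatorics is unchanged: the extra generator $e_1$ only enlarges the one-dimensional top), the space of such vectors is one-dimensional precisely when $\xi\neq 0$, and two-dimensional exactly when $\xi=0$. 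Hence $[E_\xi]=0$ forces $\xi=0$. Since $\mathbb C^{\mathbb Z}_{\mathrm{fin}}$ has countable dimension, this alone is not enough; to reach uncountable dimension I would instead let $\xi$ range over \emph{all} of $\mathbb C^{\mathbb Z}$ — for general (not finitary) $\xi$ the module $N_\xi$ is no longer finite-dimensional but the induced $E_\xi$ still makes sense and is still locally $L(\mathfrak n)$-finite, giving a linear map from $\mathbb C^{\mathbb Z}$, of uncountable dimension, whose kernel is again $0$ by the same socle computation. The main obstacle is precisely this last verification: checking that the leading-term / strong-genericity argument of Theorem~\ref{thmmain7} transfers to $E_\xi$ and pins down its $U(L(\mathfrak n))$-socle dimension as a function of $\xi$; everything else (exactness of induction, linearity, the dimension bookkeeping against Subsection~\ref{s3.1-5}) is routine.
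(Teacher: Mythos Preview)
Your construction of the extensions $E_\xi$ by inducing the two-dimensional modules $N_\xi$ is exactly the right map to look at: under the Frobenius reciprocity isomorphism the paper invokes, your map $\xi\mapsto[E_\xi]$ is precisely the composite
\[
\mathbb{C}^{\mathbb{Z}}\hookrightarrow
\mathrm{Ext}^1_{L(\mathfrak{n})}(\mathbb{C}_\Lambda,\mathbb{C}_\Lambda)
\longrightarrow
\mathrm{Ext}^1_{L(\mathfrak{n})}(\mathbb{C}_\Lambda,\mathrm{Res}\,M(\Lambda,\theta))
\cong
\mathrm{Ext}^1_{U(\hat{\mathfrak{g}})}(M(\Lambda,\theta),M(\Lambda,\theta)).
\]
(Also, a small slip: $N_\xi$ is two-dimensional for \emph{every} $\xi\in\mathbb{C}^{\mathbb{Z}}$, not just finitary ones.)

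The genuine gap is your injectivity claim. The extension $[E_\xi]$ vanishes whenever there exists $\omega'\in M(\Lambda,\theta)$ with $(X_\alpha\otimes t^j-\Lambda(\alpha)_j)\omega'=-\delta_{\alpha,\alpha_0}\,\xi_j\,(1\otimes e_1)$ for all $\alpha,j$, and such $\omega'$ \emph{do} exist for nonzero $\xi$. Already for $\mathfrak{g}=\mathfrak{sl}_2$, with $\Lambda(\alpha_0)=\mathbf{a}$, taking $\omega'=\sum_k c_k(h\otimes t^k)(1\otimes e_1)$ gives $(e\otimes t^j-a_j)\omega'=-2\big(\sum_k c_k a_{j+k}\big)(1\otimes e_1)$, so every $\xi$ in the span of the $\mathbb{Z}$-shifts of $\mathbf{a}$ lies in the kernel; similarly $\omega'=d\cdot(1\otimes e_1)$ puts $(ja_j)_j$ in the kernel. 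Thus the leading-term argument of Theorem~\ref{thmmain7} does \emph{not} force the $L(\mathfrak{n})$-socle of $E_\xi$ to be one-dimensional for $\xi\neq 0$; it only shows that the Whittaker vector in the submodule is unique, not that no second one can be built using the $e_0$-component.

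The paper sidesteps this by not asserting injectivity at all: it uses the long exact sequence coming from $0\to\mathbb{C}_\Lambda\to\mathrm{Res}\,M(\Lambda,\theta)\to\mathrm{Coker}\to 0$ to see that the kernel of the second arrow above is a quotient of $\mathrm{Hom}_{L(\mathfrak{n})}(\mathbb{C}_\Lambda,\mathrm{Coker})$, which has at most countable dimension because $\mathrm{Coker}$ does. Since the source has uncountable dimension, the conclusion follows. Your argument can be repaired in exactly this way, by replacing ``kernel $=0$'' with ``kernel has countable dimension'', but the clean way to verify that is the homological bookkeeping the paper uses rather than a direct socle computation in $E_\xi$.
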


\begin{proof}
By adjunction, the vector space in question is isomorphic to
\begin{displaymath}
\mathrm{Ext}^1_{L(\mathfrak{n})}
(\mathbb{C}_{\Lambda},\mathrm{Res}^{\hat{\mathfrak{g}}}_{L(\mathfrak{n})}(M(\Lambda,\theta))).
\end{displaymath}
In the proof of Theorem~\ref{thmmain7} we showed that the
$L(\mathfrak{n})$-module 
$\mathrm{Res}^{\hat{\mathfrak{g}}}_{L(\mathfrak{n})}(M(\Lambda,\theta))$
has simple socle.

Applying $\mathrm{Hom}_{L(\mathfrak{n})}
(\mathbb{C}_{\Lambda},{}_-)$ to the short exact sequence
\begin{displaymath}
0\to\mathbb{C}_{\Lambda}\to
\mathrm{Res}^{\hat{\mathfrak{g}}}_{L(\mathfrak{n})}(M(\Lambda,\theta))\to
\mathrm{Coker}\to 0, 
\end{displaymath}
gives rise to  an exact sequence
\begin{equation}\label{eqmn23}
\mathrm{Hom}_{L(\mathfrak{n})}
(\mathbb{C}_{\Lambda},\mathrm{Coker})\to
\mathrm{Ext}^1_{L(\mathfrak{n})}
(\mathbb{C}_{\Lambda},\mathbb{C}_{\Lambda})\to
\mathrm{Ext}^1_{L(\mathfrak{n})}
(\mathbb{C}_{\Lambda},\mathrm{Res}^{\hat{\mathfrak{g}}}_{L(\mathfrak{n})}(M(\Lambda,\theta))).
\end{equation}
Since $M(\Lambda,\theta)$ has countable dimension, so does
$\mathrm{Coker}$. Hence $\mathrm{Hom}_{L(\mathfrak{n})}
(\mathbb{C}_{\Lambda},\mathrm{Coker})$ has countable dimension as well.

In Subsection~\ref{s3.1-5} we noticed that the 
vector space
\begin{displaymath}
\mathrm{Ext}^1_{L(\mathfrak{n})}
(\mathbb{C}_{\Lambda},\mathbb{C}_{\Lambda})
\end{displaymath}
has uncountable dimension. 
The exactness  of \eqref{eqmn23} implies that 
the rightmost term in \eqref{eqmn23} has uncountable dimension.
The claim follows.
\end{proof}

\subsection{The case of loop algebra}\label{s7.5}

We remark that, for the loop algebra $L(\mathfrak{g})$, an analogue 
of Theorem~\ref{thmmain7} holds, with a similar proof, under a weaker 
assumption that the image of $\Lambda$ (considered as a multiset) is generic.

\section{Tensor products of simple Whittaker modules}\label{s5}

\subsection{The result}\label{s5.1}

\begin{theorem}\label{thmmain5}
Assume that $\Lambda$, $\Lambda'$,  $\theta$  and $\theta'$ are as above and that 
the union of the images of $\Lambda$ and $\Lambda'$ (considered as a multiset) 
is strongly generic. Then we have
\begin{displaymath}
M(\Lambda,\theta)\bigotimes_\mathbb{C}  M(\Lambda',\theta')
\cong M(\Lambda+\Lambda',\theta+\theta'),
\end{displaymath}
in particular, the tensor product on the left hand side is a simple  module.
\end{theorem}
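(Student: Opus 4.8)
The plan is to construct an explicit $\hat{\mathfrak{g}}$-module homomorphism
\begin{displaymath}
\Psi\colon M(\Lambda+\Lambda',\theta+\theta')\longrightarrow M(\Lambda,\theta)\otimes_\mathbb{C} M(\Lambda',\theta')
\end{displaymath}
and then prove it is an isomorphism. The target is a $\hat{\mathfrak{g}}$-module on which $c$ acts as $\theta+\theta'$ (by the coproduct $c\mapsto c\otimes 1+1\otimes c$), and $1\otimes 1\in M(\Lambda,\theta)$ together with $1\otimes 1\in M(\Lambda',\theta')$ give a vector $(1\otimes 1)\otimes(1\otimes 1)$ which is an eigenvector for $L(\mathfrak{n})$: the element $X_\alpha\otimes t^j$ for $\alpha\in\Phi_\mathfrak{n}^0$ acts on it (via the coproduct) by $\Lambda(\alpha)_j+\Lambda'(\alpha)_j=(\Lambda+\Lambda')(\alpha)_j$, and $[L(\mathfrak{n}),L(\mathfrak{n})]$ annihilates it. Hence there is a unique map $\Psi$ of $U_{\theta+\theta'}$-modules sending $1\otimes 1$ to this vector, by the universal property of the induced module. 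Injectivity of $\Psi$ is immediate from Theorem~\ref{thmmain7}: under the strong genericity hypothesis the source $M(\Lambda+\Lambda',\theta+\theta')$ is simple, and $\Psi$ is nonzero.

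The main work is surjectivity, equivalently that $(1\otimes 1)\otimes(1\otimes 1)$ generates the tensor product as a $\hat{\mathfrak{g}}$-module. Here I would use a filtration/leading-term argument in the same spirit as the proof of Theorem~\ref{thmmain7}. Fix the PBW monomial bases of $M(\Lambda,\theta)$ and $M(\Lambda',\theta')$ coming from the ordered generators used in Section~\ref{s7}; then $\{p\cdot(1\otimes 1)\otimes q\cdot(1\otimes 1)\}$ over pairs of standard monomials $p,q$ in the "negative and Cartan-and-$d$" directions is a basis of the tensor product. The image of $\Psi$ visibly contains all $(p\cdot 1\otimes 1)\otimes(1\otimes 1)$ (apply $p$ to $\Psi(1\otimes 1)$ and note that everything landing in the second factor is scalar since $1\otimes 1$ is a Whittaker vector there — more precisely, the "error terms" where a generator hits the right tensorand are lower-order or scalar). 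To get a general $(p\cdot 1\otimes 1)\otimes(q\cdot 1\otimes 1)$ one argues by induction on the length of $q$: apply to an already-obtained element of the image the first generator appearing in $q$, and show that the new leading term (in the lexicographic order on bidegrees/monomials) is exactly $(p\cdot 1\otimes 1)\otimes(q\cdot 1\otimes 1)$, all other terms being strictly smaller and hence handled by induction. The non-cancellation needed at each step is precisely the computation from Case~3 of the proof of Theorem~\ref{thmmain7}, now applied to the union of the images of $\Lambda$ and $\Lambda'$, which is why we assume that union is strongly generic.

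The hard part will be bookkeeping the leading terms when a generator acts on a tensor of two monomials: by the Leibniz/coproduct rule it splits as a sum of a term acting on the left factor and a term acting on the right factor, and one must check that with the chosen order the "correct" summand strictly dominates and that within it the principal contribution does not vanish. This is exactly where strong genericity of the combined multiset is used: a cancellation of all the relevant contributions across the two factors would force linear dependence among the $\mathbb{Z}$-translates of some $\Lambda(\alpha)$, $\Lambda'(\alpha)$ and the vectors $(i\,\Lambda(\alpha)_i)$, $(i\,\Lambda'(\alpha)_i)$, contradicting the hypothesis. Once surjectivity is established, $\Psi$ is a bijective homomorphism of $\hat{\mathfrak{g}}$-modules, proving $M(\Lambda,\theta)\otimes_\mathbb{C} M(\Lambda',\theta')\cong M(\Lambda+\Lambda',\theta+\theta')$; simplicity of the right-hand side (hence of the tensor product) then follows from Theorem~\ref{thmmain7} applied to $\Lambda+\Lambda'$ and $\theta+\theta'$, whose images still form a strongly generic multiset since they lie inside the strongly generic combined multiset of $\Lambda$ and $\Lambda'$.
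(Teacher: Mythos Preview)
Your overall framework---construct $\Psi$ by the universal property, get injectivity from simplicity of the source, and then prove surjectivity---is sound and is implicit in the paper's argument as well. However, the paper takes the reverse route for the hard step: rather than building up from $v\otimes w$ to every basis element, it runs the leading-term argument of Theorem~\ref{thmmain7} \emph{downward}, showing that for any $\omega$ that is not a scalar multiple of $v\otimes w$ one can apply some $(X_\alpha\otimes t^j)-a_j$ and obtain a nonzero element with strictly larger leading term. The only new phenomenon is an Extra Case, where a Cartan/$\mathfrak{l}$-type factor at the end of the left monomial interacts with a Cartan/$\mathfrak{l}$-type factor of length one on the right; this is where the combined strong genericity of $\Lambda$ and $\Lambda'$ is invoked. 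The advantage of the paper's direction is that the argument is literally the same as in Theorem~\ref{thmmain7}, with one additional case; your ascending argument requires a more delicate induction that you have not set up correctly.

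Concretely, your surjectivity sketch has two gaps. First, the claim that when you apply a PBW monomial $p$ to $v\otimes w$ ``everything landing in the second factor is scalar'' is false: the generators comprising $p$ lie in $L(\mathfrak{m})\oplus L(\mathfrak{l})\oplus\mathbb{C}d$, not in $L(\mathfrak{n})$, so the cross-terms $p_{I^c}\cdot w$ are genuine nontrivial elements of $M(\Lambda',\theta')$, not scalars. Thus already the base case ``all $(p\cdot v)\otimes w$ lie in the image'' requires knowing that $(p'\cdot v)\otimes(q\cdot w)$ is in the image for shorter $p'$ and \emph{arbitrary} $q$, which is stronger than your inductive hypothesis. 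Second, your proposed induction on the length of $q$ alone does not close: writing $q=xq'$ and expanding $x\cdot\big((p\cdot v)\otimes(q'\cdot w)\big)$ produces the unwanted term $(xp\cdot v)\otimes(q'\cdot w)$, which has the same total degree and is not covered by induction on $|q|$. A workable scheme needs a combined ordering on pairs (for instance the lexicographic order the paper uses), and the non-cancellation step must be argued at that level.

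Finally, a small correction: the image of $\Lambda+\Lambda'$ does \emph{not} lie inside the union of the images of $\Lambda$ and $\Lambda'$, so your stated reason for its strong genericity is wrong. The conclusion is nevertheless correct: any linear dependence among the $\mathbb{Z}$-translates of the $(\Lambda+\Lambda')(\alpha)$ and the vectors $\big(i(\Lambda+\Lambda')(\alpha)_i\big)$ splits, by linearity, into the same linear combination of translates of the $\Lambda(\alpha)$'s and of the $\Lambda'(\alpha)$'s (plus their $(ia_i)$ companions), which must vanish by the assumed strong genericity of the union.
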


Of course, not all tensor products of modules
of the form $M(\Lambda,\theta)$ are simple.
For example, it is easy to check that the module
$M(\Lambda,\theta)\bigotimes_\mathbb{C}  M(-\Lambda,-\theta)$ 
is not simple.

\subsection{Proof of Theorem~\ref{thmmain5}}\label{s5.2}

We start by observing that we have an isomorphism
$\mathbb{C}_\Lambda\bigotimes_\mathbb{C}\mathbb{C}_{\Lambda'}\cong
\mathbb{C}_{\Lambda+\Lambda'}$ of $L(\mathfrak{n})$-modules.
Consequently, $L(\mathfrak{n})$ acts locally finitely on the module
$M(\Lambda,\theta)\bigotimes_\mathbb{C}  M(\Lambda',\theta')$
with unique eigenvalues of the element of $L(\mathfrak{n})$
given by $\Lambda+\Lambda'$. It is also clear that $c$ acts on 
$M(\Lambda,\theta)\bigotimes_\mathbb{C}  M(\Lambda',\theta')$
as $\theta+\theta'$.

We denote by $v$ and $w$ the canonical generators of 
$M(\Lambda,\theta)$ and $M(\Lambda',\theta')$,  respectively.
Just like in our proof of Theorem~\ref{thmmain7}, it is enough to show that
the only element of  $M(\Lambda,\theta)\bigotimes_\mathbb{C}  M(\Lambda',\theta')$
on which $L(\mathfrak{n})$ acts via scalars is $v\otimes w$, up to scalars.
Similarly to the proof of Theorem~\ref{thmmain7}, we take some other element
in $M(\Lambda,\theta)\bigotimes_\mathbb{C}  M(\Lambda',\theta')$, and show
that there is an element in $L(\mathfrak{n})$ whose application,
minus the corresponding scalar,  gives a non-zero outcome.

We have a natural basis of $M(\Lambda,\theta)\bigotimes_\mathbb{C}  M(\Lambda',\theta')$
given by pairs of monomials as in the proof of Theorem~\ref{thmmain7}.
We order them lexicographically, by comparing first the left components and
then the right components. Now the same argument as in 
the proof of Theorem~\ref{thmmain7} work, except that we have to consider
one extra case.

{\bf Extra Case.} Assume that we have an element of the form
$((u_1u_2\dots u_m)\cdot v)\otimes w$ and that $u_m$ corresponds to $\beta$ which is
either $0$ or a weight of $\mathfrak{l}$. Then the application of 
$L(\mathfrak{n})$ produces a scalar multiple of 
$((u_1u_2\dots u_{m-1})\cdot v)\otimes w$. The latter can also  
appear after the application of $L(\mathfrak{n})$
to $((u_1u_2\dots u_{m-1})\cdot v)\otimes (u\cdot w)$, where
$u$ corresponds to $\beta$ which is either $0$ or a weight of $\mathfrak{l}$. 
Therefore we have to  argue that one can avoid cancellation of the coefficient
at $((u_1u_2\dots u_{m-1})\cdot v)\otimes w$. This is done by the same argument
as in the last case of the proof of Theorem~\ref{thmmain7}, since we
have assumed that the union of the images of $\Lambda$ and $\Lambda'$ 
(considered as a multiset)  is strongly generic.
The claim of the theorem follows.

\section{Examples}\label{s6}

\subsection{Construction of (strongly) generic sets}\label{s6.1}

It is not too difficult to produce large examples of generic and strongly generic sets in
$\mathbb{C}^\mathbb{Z}$. For instance, for $j\in \mathbb{R}_{>1}$, define
$\mathbf{a}(j)=(a(j)_i)_{i\in\mathbb{Z}}$ as follows:
\begin{displaymath}
a(j)_i:=
\begin{cases}
0, & i\leq 0;\\
j^i, & i>0.
\end{cases}
\end{displaymath}

\begin{proposition}\label{prop6-1}
The set $\{\mathbf{a}(j)\,:\, j\in \mathbb{R}_{>1}\}$ is strongly generic.
\end{proposition}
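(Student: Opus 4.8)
The plan is to translate everything into generating functions. Associate to each $\mathbf{b}=(b_i)_{i\in\mathbb{Z}}$ with $b_i=0$ for $i\ll 0$ the series $G_{\mathbf{b}}(x):=\sum_i b_ix^i\in\mathbb{C}((x))$; the assignment $\mathbf{b}\mapsto G_{\mathbf{b}}$ is linear, injective on such $\mathbf{b}$ (a non-zero one has a non-zero lowest coefficient), and carries the $\mathbb{Z}$-action to multiplication by powers of $x$. All vectors appearing in the definition of strong genericity have support bounded below, and any linear dependence among them involves only finitely many $j$'s, so it suffices to fix a finite set $S\subset\mathbb{R}_{>1}$ and work with it. The multiset condition is then immediate: for distinct $j,j'\in S$, no $\mathbb{Z}$-translate of $\mathbf{a}(j)$ equals a translate of $\mathbf{a}(j')$ or equals $(ia(j')_i)_{i}$, as one checks by comparing the index of the leftmost non-zero entry and then the ratio of two consecutive entries far out in the tail.

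For the linear independence, note that $\mathbf{a}(j)^{(n)}$ has generating function $x^{-n}\,jx/(1-jx)$ while $(ia(j)_i)_i$ has generating function $x\frac{d}{dx}\left(jx/(1-jx)\right)=jx/(1-jx)^2$. Hence a hypothetical vanishing combination becomes an identity of rational functions
\begin{displaymath}
\sum_{j\in S}P_j(x)\,\frac{jx}{1-jx}\;+\;\sum_{j\in S}d_j\,\frac{jx}{(1-jx)^2}\;=\;0,
\end{displaymath}
with $P_j\in\mathbb{C}[x,x^{-1}]$ collecting the coefficients of the translates of $\mathbf{a}(j)$ and $d_j\in\mathbb{C}$ the coefficient of $(ia(j)_i)_i$.

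The next step is to peel off the two kinds of terms by pole order. Multiplying the identity by $\prod_{j\in S}(1-jx)^2$ produces an identity of Laurent polynomials; evaluating it at $x=1/j_0$ annihilates every $P_j$-term and every $d_j$-term with $j\neq j_0$, and leaves a non-zero scalar multiple of $d_{j_0}$, so all $d_j=0$. It remains to show that the functions $jx/(1-jx)$, $j\in S$, are linearly independent over $\mathbb{C}[x,x^{-1}]$. Comparing coefficients of $x^N$ for $N\gg 0$, where the cut-off at $i=0$ is irrelevant, the relation reads $\sum_{j\in S}\left(\sum_n c_{j,n}j^n\right)j^N=0$ for all large $N$, so linear independence of the characters $N\mapsto j^N$ forces $\sum_n c_{j,n}j^n=0$ for every $j\in S$.

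The part I expect to be the main obstacle is the subsequent descent: one must feed the relations $\sum_n c_{j,n}j^n=0$ back into the coefficient equations at $x^0,x^{-1},x^{-2},\dots$, where the cut-off does enter, and conclude that each $P_j$ vanishes. The difficulty is that $\sum_n c_{j,n}j^n=0$ does not annihilate the individual $c_{j,n}$, and below the cut-off the contributions of the different $j$'s are linked only through a single scalar equation at each level; carrying this step out is where the argument has to use properties of the parameter set $\mathbb{R}_{>1}$ beyond the mere distinctness of the $j$'s, and it is the point I would expect to require the most care.
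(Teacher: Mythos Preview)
Your generating-function translation is clean and your pole argument correctly kills the $d_j$'s. You are also right to flag the final descent as the crux---and in fact that step cannot be completed, because the proposition as stated is false. For any $j>1$ one has
\begin{displaymath}
\tfrac{1}{j}\,\mathbf{a}(j)-\mathbf{a}(j)^{(-1)}=\delta_1,
\end{displaymath}
the sequence with a single $1$ in position~$1$ (for $i\ge 2$ the entry is $\tfrac{1}{j}j^{i}-j^{i-1}=0$, for $i=1$ it is $1$, and for $i\le 0$ both terms vanish). Hence for any two distinct $j_1,j_2\in\mathbb{R}_{>1}$,
\begin{displaymath}
\tfrac{1}{j_1}\,\mathbf{a}(j_1)-\mathbf{a}(j_1)^{(-1)}-\tfrac{1}{j_2}\,\mathbf{a}(j_2)+\mathbf{a}(j_2)^{(-1)}=0,
\end{displaymath}
a non-trivial relation among $\mathbb{Z}$-translates. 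In your notation this is $P_{j_1}(x)=\tfrac{1}{j_1}-x$, $P_{j_2}(x)=-\tfrac{1}{j_2}+x$; both satisfy $P_{j_k}(1/j_k)=0$ yet are non-zero, and $P_{j_1}(x)\frac{j_1x}{1-j_1x}+P_{j_2}(x)\frac{j_2x}{1-j_2x}=x-x=0$. So the set is not even generic.

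The paper's argument breaks at the same place. After isolating the maximal $j$ it asserts that ``already the linear combination of these terms must be zero''; but the growth comparison only forces the $j$-block to vanish for $i\gg 0$, not identically, and the subsequent ``smallest shift'' step only treats the identical-vanishing case. The relation above shows the $j$-block can be zero for all large $i$ while leaving a finitely supported remainder that is cancelled by a lower-$j$ block. Your suspicion that something beyond distinctness of the $j$'s would be required was exactly on target; no such extra input can rescue the claim for this family.
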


\begin{proof}
Consider some non-trivial linear combination of all these vectors, their $\mathbb{Z}$-shifts
and $(ia_i)$-modifications and assume that this linear combination is zero.
Let $j$ be maximal such that the corresponding $(a(j)_i)$
or $(ia(j)_i)$ appears in this linear combination with a non-zero
coefficient. 

Assume first that the coefficient at $(ia(j)_i)$ is non-zero. Note that
$ia(j)_i$ growth strictly faster than all other functions that appear 
with non-zero coefficient. This implies that it outgrowth all other terms
of the linear combination for $i\gg 0$ and hence the corresponding terms
will be non-zero. This is a contradiction which  means that this case
cannot occur.

Consequently, some $\mathbb{Z}$-shifts of $(a(j)_i)$ appear in our linear combination
with non-zero coefficients. Since the linear combination of these terms 
outgrowth the rest, for $i\gg 0$, already the linear combination of these
terms must be zero. But in such linear combination we have the smallest
$\mathbb{Z}$-shift which appears with a non-zero coefficient. Since 
$(a(j)_i)$ is zero up to the zero term and is non-zero after that, 
that first non-zero term for that smallest $\mathbb{Z}$-shift cannot
be cancelled by any other shift. The obtained contradiction completes the proof.
\end{proof}

\subsection{$\mathfrak{sl}_2$-example}\label{s6.2}

Let $\{e,h,f\}$ be the standard basis for $\mathfrak{sl}_2$,
where  $e$ corresponds to the positive root $\alpha$. 
For the algebra $\widehat{\mathfrak{sl}}_2$, we have a unique choice of
$\mathfrak{n}$, namely $\mathbb{C}e$. Then
$L(\mathbb{C}e)$ is a commutative algebra.
Taking any strongly generic $\mathbf{a}$ as $\Lambda(\alpha)$,
for example, any $\mathbf{a}(j)$ from the previous subsection,
and any $\theta$, produces a simple 
$\widehat{\mathfrak{sl}}_2$-module $M(\mathbf{a},\theta)$.

\subsection{$\mathfrak{sl}_3$-example}\label{s6.3}

Let $\{\alpha,\beta,\alpha+\beta\}$ be the set of positive roots for
$\mathfrak{sl}_3$. We have two essentially different cases for 
our construction.

{\bf Case~1.} We can take $\mathfrak{n}=\mathfrak{n}_+$ in which case
$\Phi^0_\mathfrak{n}=\{\alpha,\beta\}$. In this case 
$L(\mathfrak{n})$ is nilpotent of degree two,  but not abelian.
Choosing 
a strongly generic pair consisting of $\mathbf{a}$ (as $\Lambda(\alpha)$)
and $\mathbf{b}$ (as $\Lambda(\beta)$), and any $\theta$,
we get the corresponding simple 
$\widehat{\mathfrak{sl}}_3$-module $M(\mathbf{a},\mathbf{b},\theta)$.
For example,  we can take as $\mathbf{a}$ and $\mathbf{b}$  any two
different elements constructed in the previous subsection.

{\bf Case~2.} We can take $\mathfrak{n}=\mathbb{C}\{X_\alpha,X_{\alpha+\beta}\}$ 
in which case $\Phi^0_\mathfrak{n}=\{\alpha,\alpha+\beta\}$. Note that
$L(\mathfrak{n})$ is abelian in this case. Now, choosing 
a strongly generic pair consisting of $\mathbf{a}$ (as $\Lambda(\alpha)$)
and $\mathbf{c}$ (as $\Lambda(\alpha+\beta)$), and any $\theta$,
we get the corresponding simple 
$\widehat{\mathfrak{sl}}_3$-module $M(\mathbf{a},\mathbf{c},\theta)$.
For example,  we can take as $\mathbf{a}$ and $\mathbf{c}$  any two
different elements constructed in the previous subsection.

\vspace{2mm}

\noindent
V.~M.: Department of Mathematics, Uppsala University, Box. 480,
SE-75106, Uppsala,\\ SWEDEN, email: {\tt mazor\symbol{64}math.uu.se}

\end{document}